\documentclass[letterpaper,11pt]{article}
\usepackage{graphicx} 
\usepackage[]{amsmath,amssymb,amsfonts,latexsym,amsthm,enumerate,fullpage,xcolor,bbm,mathtools}
\usepackage[pagebackref,colorlinks,citecolor=blue,urlcolor=blue,linkcolor=black,,bookmarks=true]{hyperref}
\usepackage[nameinlink, noabbrev, capitalize]{cleveref}
\usepackage{amsmath, thm-restate}
\usepackage{pgfplots}
\usepackage[noadjust]{cite}

\title{Covering $\F_2^n$ with Hamming Balls}
\author{Michael Jaber\thanks{Department of Computer Science, University of Texas at Austin. \href{mailto:mjjaber@cs.utexas.edu}{\texttt{mjjaber@cs.utexas.edu}}. Supported by NSF Grant CCF-2312573, and two Simons Investigator Awards (\#409864, David Zuckerman) and (\#623052, Brent Waters)}
\and 
Vinayak M.\ Kumar\thanks{Department of Computer Science, University of Texas at Austin. \href{mailto:vmkumar@cs.utexas.edu}{\texttt{vmkumar@cs.utexas.edu}}. Supported in part by NSF Grant CCF-2312573, a Simons Investigator Award (\#409864, David Zuckerman), a Jane Street Graduate Research Fellowship, and a UT Austin Dean's Prestigious Fellowship Supplement}
}
  
\date{}
\renewcommand{\backref}[1]{}

\renewcommand*{\backrefalt}[4]{%
    \ifcase #1 %
    No citations.%
    \or
    {#2}%
    \else
    {#2}%
    \fi
}

\newtheorem{lemma}{Lemma}
\newtheorem{proposition}{Proposition}

\newtheorem{question}{Question}

\newtheorem{definition}{Definition}

\usepackage{comment}
\usepackage{nicefrac}
\crefname{prop}{Proposition}{Propositions}
\crefname{ineq}{inequality}{inequalities}
\creflabelformat{ineq}{#2(#1)#3}

\crefname{THM}{Theorem}{Theorems}

\newcommand{\F}{\mathbb{F}}

\newcommand{\supp}{\text{supp}}

\newcommand{\eps}{\varepsilon}

\renewcommand{\subset}{\subseteq}

\begin{document}

\maketitle

\abstract{
Green asked the following question concerning structures in sumsets: Suppose that $\F_2^n$ is partitioned into sets $A_1, \dots, A_K$. Does $A_i+A_i$ contain a coset of codimension $O_K(1)$ for some~$i$? An answer is not known even in the case of $K = 3$. We resolve this question in the affirmative in two special cases:~(1) when $A_1, A_2$ are Hamming balls of radius $r < n/2 - 7$ relative to different bases, and~(2) when $A_1$ is a Hamming ball of sufficiently small constant density. 
}

\section{Introduction}

Understanding the additive structure of sumsets is a major theme in additive combinatorics. A foundational result of Bourgain \cite{bourgain1990arithmetic} showed that if $A \subset [N]$ is dense, then $A+A$ contains a long arithmetic progression. Over finite field vector spaces such as $\F_2^n$, a well-studied question is to determine the largest affine subspace one can always find in $A+A$ when $A$ is a set of density $\alpha$. The techniques of Green \cite{proginsumsetsgreen}\footnote{See \cite{green2003restriction} for a detailed proof.} show that if $\alpha \geq n^{-1/4}$, then $A+A$ contains a coset of dimension $\Omega(\alpha^2 n)$. Sanders \cite{TomSanders2011} removed the restriction on $\alpha$ and showed that $A+A$ contains a coset of dimension $\Omega(\alpha n)$.  

These lower bounds are quite far from the best upper bounds. Indeed, there exist sets $A \subset \F_2^n$ with density $\alpha = \Omega(1)$ where $A+A$ contains no coset of codimension $O(\sqrt{n})$ (see \cite[Theorem 9.4]{Gre05}). The \textit{niveau set} construction, inspired by Ruzsa \cite{ImreZ1991}, is one such example, where $A$ is the set of strings in $\{0,1\}^n$ with weight at most $r = n/2 - c\sqrt{n}$, i.e., a Hamming ball of radius $r$, for an adequately chosen constant $c > 0$. This follows from the central limit theorem, and the fact that every coset of dimension $d$ contains a string of weight at least $d$. In other words, when $A$ is of a fixed constant density, the largest coset one can guarantee to find in $A+A$ has dimension at most $n - O(\sqrt{n})$. On the other hand, the known lower bounds only guarantee that $A+A$ contains a coset of dimension at least $\Omega(n)$, and obtaining tight bounds remains a question of considerable interest.

More generally, one can also ask about the largest coset one can always find in an iterated sumset. A central result due to Sanders \cite{sanders2012bogolyubov} shows that if $A$ has density $\alpha$, then $A+A+A$ contains a coset of codimension $O(\log^{4+o(1)}(1/\alpha))$. The Polynomial Bogolyubov conjecture asks if $kA$ for some fixed constant $k \geq 3$ always contains a coset of codimension $O(\log (1/\alpha))$. This is known to imply the Polynomial Freiman-Ruzsa conjecture, which was resolved in a recent breakthrough \cite{pfrproof}. Both the Polynomial Bogolyubov and Freiman-Ruzsa conjectures have many applications in additive combinatorics and theoretical computer science; see \cite{sanders2012bogolyubov, gs006, pfrproof} and the references therein. 

Towards a better understanding of all of these problems, it is an interesting technical task to identify conditions that guarantee large cosets in $A+A$. One question along these lines is the following, which is listed as Problem 53 in Ben Green's list of 100 Open Problems \cite{GreOp}:

\begin{question}\label{q:partition}
    Suppose $\F_2^n$ is partitioned into sets $A_1, \dots, A_K$. Does $A_i + A_i$ contain a coset of codimension $O_K(1)$ for some $i$?
\end{question}

Even in the setting of $K = 3$, nothing seems to be known. Green notes that a special case of \Cref{q:partition} is to show that $\F_2^n$ cannot be covered by three Hamming balls relative to different bases. Let $B_r(x)$ denote the set of strings that are Hamming distance at most $r$ from $x \in \F_2^n$. For a linear map $L \colon \F_2^n \to \F_2^n$, let $LB_r(x)$ denote the set $\{Ly \mid y \in B_r(x)\}$. Our main result is the following: 

\begin{restatable}{thm}{main}\label{thm:main}
    For $n$ sufficiently large, let $L \colon\F_2^n \to \F_2^n$ be an invertible linear map, and let $A_1 = B_r(x_1)$ and $A_2 = LB_r(x_2)$ for $x_1, x_2 \in \F_2^n$ and $r < n/2 - 7$. Suppose that $\F_2^n = A_1 \cup A_2 \cup S$. Then, $S+S = \F_2^n$. 
\end{restatable}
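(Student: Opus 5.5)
We may assume $S$ is exactly the complement of $A_1\cup A_2$, since shrinking $S$ only shrinks $S+S$. The first step is to rewrite $S$ as an intersection of two large balls. Let $\mathbf 1$ denote the all-ones vector and set $R=n-r-1\ge n/2+6$. Then
\[
\F_2^n\setminus B_r(x_1)=B_{R}(x_1+\mathbf 1),\qquad \F_2^n\setminus LB_r(x_2)=LB_R(x_2+\mathbf 1).
\]
Hence $S=C_1\cap C_2$, where $C_1=B_R(a)$ and $C_2=LB_R(b)$ are balls of radius slightly more than $n/2$, each in its own basis. Fix an arbitrary $z\in\F_2^n$. We need some $y$ with $y,\,y+z\in C_1\cap C_2$, i.e.\ $y\in T_1\cap T_2$, where
\[
T_1=\{y: y,\ y+z\in C_1\},\qquad T_2=\{y: y,\ y+z\in C_2\}.
\]
The plan is a density argument: if $|T_1|+|T_2|>2^n$ then $T_1\cap T_2\neq\emptyset$. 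Moreover $T_2=L\,T_1'$, where $T_1'$ is the analogous set for the ball $B_R(b)$ and the vector $L^{-1}z$. So it suffices to prove the one-basis statement: for every centre $a$ and every $z$, $|\{y: y,\,y+z\in B_R(a)\}|>2^{n-1}$.

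For this single-ball estimate, translate so that $a=0$ and let $k=|z|$. For uniform $y$, let $X$ be the weight of $y$ on $\supp(z)$ and $Y$ its weight off $\supp(z)$; these are independent $\mathrm{Bin}(k,1/2)$ and $\mathrm{Bin}(n-k,1/2)$ variables. The condition $y,y+z\in B_R(0)$ becomes
\[
\max(X,\,k-X)+Y\le R .
\]
For bounded $k$ this holds with probability about $\Pr[\mathrm{Bin}(n,1/2)\le n/2+6-O(1)]$, which is bigger than $1/2$. This is where the constant $7$ enters: it guarantees the slack $R-n/2\ge 6$ beats the $O(1)$ loss coming from $\max(X,k-X)-X$, and gives a strict majority. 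I would carry this out with an explicit local-CLT/binomial estimate.

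The main obstacle is large $k$, where the density argument breaks down. In that regime $\max(X,k-X)\approx k/2+\Theta(\sqrt k)$, so the success probability drops below $1/2$ and the bound $|T_1|+|T_2|>2^n$ fails. There I would give up uniform $y$ and exploit the symmetry $y\mapsto y+z$, together with the freedom to decompose $z$. Concretely, I would first try to write $z=z_1+z_2$ with $z_1,z_2$ chosen so that the problem reduces to the small-$k$ case. Failing that, I would restrict $y$ to a coset on which the $\supp(z)$-part of $y$ (in the first basis) is forced to have weight exactly $\lfloor k/2\rfloor$, and redo the density comparison inside that coset. That comparison still needs $T_2$ to have more than half density there, and controlling this for an arbitrary second basis $L$ is the step I expect to need the most care. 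A fallback is to use that $T_1$ and $T_2$ are both unions of $z$-cosets $\{y,y+z\}$ and to count at the level of the quotient $\F_2^n/\langle z\rangle$, where a comparison of densities against $1/2$ may again suffice.
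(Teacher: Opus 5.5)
Your reduction to the single-ball estimate is valid, but that estimate is false in exactly the regime the theorem is about, and nothing in the proposal replaces it.

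For $r$ near $n/2-7$ the slack $R-n/2$ is $O(1)$, while the loss $|X-k/2|$ is of order $\sqrt k$. So $|T_1|>2^{n-1}$ fails once $\mathrm{wt}(z)$ exceeds an absolute constant. You would also need $\mathrm{wt}(L^{-1}z)$ bounded, and you do not control it. Two examples show how badly the count fails:
\begin{itemize}
\item For $z$ with $\mathrm{wt}(z),\mathrm{wt}(L^{-1}z)\approx n/2$, each $T_i$ has density about $1/4$.
\item For $z=1^n$ (with $a=0$), $T_1$ is the band $r<\mathrm{wt}(y)<n-r$. This is $O(1)$ layers, of total density $O(1/\sqrt n)$.
\end{itemize}
So no lower bound on $|T_1|+|T_2|$ can succeed. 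Counting does settle the easier range $r\le n/2-C\sqrt n$, where Chebyshev gives each $T_i$ density above $1/2$ for every $z$. The content of the theorem is the range between that and $n/2-7$.

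Your proposed repairs are not arguments:
\begin{itemize}
\item $S+S$ is not closed under addition, so splitting $z=z_1+z_2$ gains nothing.
\item Pairing up coordinates of $\supp(z)$ does give a coset on which $T_1$ is dense. But you then need $T_2$ to be dense in that coset for an arbitrary $L$, which is the whole difficulty.
\item Counting in $\F_2^n/\langle z\rangle$ gives the same inequality, since $T_1$ and $T_2$ are unions of $\langle z\rangle$-cosets.
\end{itemize}

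What is missing is structure in place of estimates. The paper assumes $x\notin S+S$ and translates so $x_1=0$. It notes $L1^n\notin A_2+A_2=LB_{2r}(0)$ because $2r<n$, so $A_2\cap(L1^n+A_2)=\emptyset$. \Cref{lem:upperweight} then supplies an affine subspace $v+V$ with $\dim V\ge n/2+3$, contained \emph{entirely} in $A_1^c$, and with $x,L1^n\in V$.

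In your density language, the argument runs as follows. Each $y\in v+V$ has $y\in A_2$ or $y+x\in A_2$, since otherwise $x\in S+S$. Hence $A_2$ has density at least $1/2$ in $v+V$. Invariance of $v+V$ under $L1^n$, together with $A_2\cap(L1^n+A_2)=\emptyset$, caps that density at $1/2$. Equality forces $v+V\subset A_2\cup(L1^n+A_2)$, which is the content of \Cref{lem:subspacepart}. Pulling back by $L^{-1}$ and translating gives an affine subspace of dimension at least $n/2+3$ inside $B_r(0)\cup B_r(1^n)$. The binormal-form argument of Enomoto et al.\ (\Cref{lem:subspacehitsmiddle}) rules this out.

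Thus the second basis is controlled by the antipodal symmetry plus a coding-theoretic rigidity fact, not by any binomial or local-CLT estimate. The key choice is a subspace lying wholly inside $A_1^c$ rather than merely meeting it densely, with $L1^n$ among its directions. Neither ingredient appears in your plan.
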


By applying an appropriate change of basis, it follows easily from \Cref{thm:main} that $\F_2^n$ cannot be covered by three Hamming balls of radius $r < n/2 - 7$ relative to different bases. 

We are also able to obtain a result in the style of \Cref{q:partition} when one of the sets in the covering is a Hamming ball of small constant density.

\begin{restatable}{thm}{twoarb}\label{thm:twoarb}
    For $n$ sufficiently large, there exists an absolute constant $C > 0$ so that if $r < n/2 - C\sqrt{n}$ and
    $$
        \F_2^n = B_r(0) \cup A \cup B,
    $$
    then $A+A$ or $B+B$ contains a coset of codimension 1. 
\end{restatable}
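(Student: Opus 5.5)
The plan is a two-colouring argument on the large set $T=\F_2^n\setminus B_r(0)$. By the central limit theorem, $|B_r(0)|\le \delta 2^n$ with $\delta=\delta(C)\to 0$ as $C\to\infty$, so $A\cup B\supseteq T$ and $|T|\ge (1-\delta)2^n$. We use a basic fact repeatedly: if $X$ is contained in a coset $v+W$ of a subspace $W$ and $|X|>|W|/2$, then $X+X=W$. Indeed, for $w\in W$, the sets $X$ and $X+w$ both lie in $v+W$ and each has more than half its size, so they intersect.

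In particular, if $|A|>2^{n-1}$ or $|B|>2^{n-1}$ we are done. So assume both have density in $[\tfrac12-\delta,\tfrac12]$, which forces $|A\cap B|\le \delta 2^n$. Suppose, for contradiction, that neither $A+A$ nor $B+B$ contains a coset of codimension $1$. Then for every hyperplane $H$, $A+A$ misses a point of $H$ and a point of $\F_2^n\setminus H$, and the same holds for $B+B$.

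The first step turns these missing points into approximate invariances. Suppose $a\notin A+A$. Then $A\cap(A+a)=\emptyset$. Since both sets have density about $\tfrac12$, $A+a$ is essentially the complement of $A$, which up to $O(\delta)2^n$ points is $B$. Similarly, if $b\notin B+B$, then $B\cap(B+b)=\emptyset$, so $B+b\approx A$. Combining the two gives $A+a+b\approx A$, that is, $|A\,\triangle\,(A+a+b)|=O(\delta)2^n$.

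Let $M_A$ and $M_B$ be the sets of points missed by $A+A$ and by $B+B$. Every difference of two elements of $M_A\cup M_B$ is then an approximate period of $A$. Because no codimension-$1$ coset avoids $M_A$ (and likewise for $M_B$), the set $M_A\cup M_B$ meets every affine hyperplane. Hence the differences it generates span a subspace $W$ that is all of $\F_2^n$ or has codimension at most $1$. The delicate point is that errors accumulate along sums of generators, so we must keep the generating set of bounded size. For this I would instead select a few periods $c_1,\dots,c_k$, each with error $O(\delta)$, and use the stabiliser-type fact that $\{c : |A\triangle(A+c)|\le \eta 2^n\}$ is closed under addition with errors adding. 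This would show $A$ is $O(\delta)$-close to a union of cosets of $W$.

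In the second step we finish. If $W=\F_2^n$, then $A$ is close to $\emptyset$ or to $\F_2^n$, contradicting density $\approx\tfrac12$. So $W$ is a hyperplane and $A\approx v+W$, hence $B\approx v+W+a$, the other coset. Then $|A\cap(v+W)|\ge(1-O(\delta))|W|>|W|/2$, and the basic fact gives $A+A\supseteq W$, a codimension-$1$ coset. This is the desired contradiction, and it holds once $C$ is large enough that $O(\delta)<\tfrac12$.

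The main obstacle is the passage from pointwise missing elements to a bounded-size set of approximate periods that generates a subspace of codimension at most $1$. Naively, closure under sums makes the error grow with the number of summands. My first attempt would be a Fourier argument: $A\cap(A+a)=\emptyset$ gives $\sum_\xi \hat 1_A(\xi)^2(-1)^{\xi\cdot a}=0$. Combined with $|A|\approx\tfrac12$, this should force Fourier mass near $\tfrac14$ on characters $\xi$ with $\xi\cdot a=1$, and collecting such relations over $a\in M_A$, $b\in M_B$ should concentrate the mass of $\hat 1_A$ on a single nonzero character, i.e. a hyperplane. The fallback is to exploit the explicit structure of $T$ (weight concentration) to rule out the degenerate configurations directly.
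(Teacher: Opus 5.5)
\textbf{There is a genuine gap, and it is deeper than the error accumulation you flag.} Apart from the vague fallback, the only property of $B_r(0)$ your argument uses is $|B_r(0)|\le\delta 2^n$, and under that hypothesis alone the conclusion is false.

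Take $k=\lfloor \eps\sqrt n\rfloor$ and replace $B_r(0)$ by the middle band $S=\{x : |\text{wt}(x)-n/2|\le k\}$, which has density $O(\eps)$. Let $A=\{x:\text{wt}(x)<n/2-k\}$ and $B=A+1^n=\{x:\text{wt}(x)>n/2+k\}$. Then $\F_2^n=S\cup A\cup B$. Since $\text{wt}(x+y)\le\text{wt}(x)+\text{wt}(y)$, we get $B+B=A+A\subseteq\{z:\text{wt}(z)<n-2k\}$. So $(A+A)^c\supseteq 1^n+\{0,e_1,\dots,e_n\}$, which meets both cosets of every hyperplane. Hence neither $A+A$ nor $B+B$ contains a codimension-$1$ coset.

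Every deduction in your first step is still valid in this example. Both sets have density about $\tfrac12$, and $A+a\approx B$ for $a\in M_A$. The differences of $M_A$, which include every $e_i$, span $W=\F_2^n$. In fact $W=\F_2^n$ always under your hypothesis, since $M_A$ alone meets every affine hyperplane, so the ``hyperplane'' branch of your second step never occurs.

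Each $e_i$ is even an $O(n^{-1/2})$-approximate period of $A$, yet $A+1^n=B$ is disjoint from $A$. So the claim that $A$ is $O(\delta)$-close to a union of cosets of $W$ is false: approximate invariance under a generating set does not transfer to its span in $\F_2^n$. Choosing a bounded number of periods cannot help either, since they span only a bounded-dimensional subspace. The Fourier heuristic fails for the same reason: the Fourier mass of this $A$ is spread thinly over many characters, not concentrated on one.

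\textbf{The missing idea is to use the geometry of $U=B_r(0)^c$.} It contains the entire band of weights $n/2\pm C\sqrt n$, and this band is exactly what is absent in the counterexample above. The paper argues exactly rather than approximately.
\begin{itemize}
\item \Cref{prop:codim1} gives bases $\{v_i\}$, $\{w_i\}$ and shifts $v,w$ with $v+\{0,v_1,\dots,v_n\}\subset(A+A)^c$ and $w+\{0,w_1,\dots,w_n\}\subset(B+B)^c$.
\item The set $T=\bigcap_{b\in\{0,1\}^3}(U+b_11^n+b_2v+b_3w)$ is invariant under $v$ and $w$. On it, \Cref{lem:subspacepart} gives the exact splitting $T\subset A\cup(v+A)$, which is the exact form of your $A+a\approx B$.
\item No edge $u\sim u+v_i$ inside $T$ can join $A$ to $v+A$.
\item Write $v=v_{i_1}+\dots+v_{i_k}$ and apply Spencer's theorem to the $O(n)$ sets $\supp(b_11^n+b_2v+b_3w+\sum_{j\le\ell}v_{i_j})$. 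This yields a $t$ whose whole path $t,t+v_{i_1},\dots,t+v$ stays in the middle band, hence in $T$, forcing $v\in A+A$.
\end{itemize}
Your fallback of ``exploiting weight concentration'' is where the entire proof lives, and the proposal does not supply it.
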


The proof utilizes Spencer's ``six standard deviations suffice'' theorem from discrepancy theory \cite{spencersix}. The codimension 1 in \Cref{thm:twoarb} is optimal, since one can choose $A$ and $B$ to be the set of even and odd weight strings in $B_r(0)^c$, respectively. To the best of our knowledge, prior to \Cref{thm:twoarb}, the best known result in this setting was to argue that $A$ or $B$ is of constant density, then apply the result of \cite{TomSanders2011} to say that $A+A$ or $B+B$ contains a coset of dimension $\Omega(n)$. 

\subsection{Related Work}
\subsubsection{Covering Radius}
Given a subset $S \subset \F_2^n$, the \textit{covering radius} of $S$ is the smallest $r$ so that every point in $\F_2^n$ is Hamming distance at most $r$ from at least one element of $S$. In coding theory, constructing codes with small covering radius is a topic of intense study with numerous applications (see \cite{cohen1997covering}, for example). A central open problem is to determine for a fixed $r$, the size of the smallest set $S \subset \F_2^n$ with covering radius $r$ (see also Problem 40 in \cite{GreOp}). Our work studies a more general problem, where one tries to cover $\F_2^n$ with Hamming balls after having applied an arbitrary linear map to each of the balls. It seems like an interesting direction to see if constructions of covering codes in this model have any new applications in coding theory or computer science.

\subsubsection{Chromatic number of Cayley graphs} 
A special case of \Cref{q:partition} is to show that if $\F_2^n$ is covered by three sets $A, A+x, A+y$ for shifts $x,y \in \F_2^n$, then $A+A$ contains a coset of codimension 1. Indeed, this follows from a result of Payan \cite{PAYAN1992271}, which says that no Cayley graph on $\F_2^n$ can have chromatic number 3. To see why, start with any such covering of $\F_2^n$, and consider the Cayley graph $G = \text{Cay}(\F_2^n, S)$, where $S = (A+A)^c$. Then, $A, A+x, A+y$ are each independent sets in $G$ which cover $\F_2^n$, which means $G$ is 3-colorable. By Payan's theorem, $G$ cannot have chromatic number 3, so $G$ must be bipartite. The result follows since a Cayley graph on $\F_2^n$ is bipartite if and only if there exists some codimension 1 coset $V$ which is disjoint from $S$.

 There are various proofs of Payan's theorem \cite{PAYAN1992271, Cervantes2023ChromaticNO, krebs2026abeliangroups3chromaticcayley}, and it remains open to see if any of the known proofs can be extended to resolve \Cref{q:partition}. Interestingly enough, the proof technique in \Cref{thm:twoarb} can be used to show that if $G$ is a nonbipartite Cayley graph on $\F_2^n$, the induced subgraph on strings of weight $n/2 \pm C\sqrt{n}$ for a large enough constant $C > 0$ is nonbipartite. Overall, progress on \Cref{q:partition} seems intimately related to better understanding proper colorings of Cayley graphs. 

 Kaave Hosseini \cite{Kaaveemail} also observed that an affirmative answer to \Cref{q:partition} would have interesting algorithmic applications for coloring Cayley graphs. In particular, given a Cayley graph $G = \text{Cay}(\F_2^n,S)$ where $\chi(G) = K$, one can find a valid coloring of $G$ using $2^{O_K(1)}$ many colors in time $2^{O_K(n)}$, which for $K = O(1)$ is polynomial in the size of the graph. To see why, let $A_1, \dots, A_K$ be the color classes in a coloring of $G$. If there is some $A_i + A_i$ which contains a coset $V$ of codimension $O_K(1)$, then this means that $S \cap V = \emptyset$. In particular, this implies that $V$ and all of its shifts are independent sets in $G$, which provides a valid coloring using $2^{O_K(1)}$ many colors. In order to find $V$, one can exhaustively search over all possible subspaces in time $2^{O_K(n)}$. 

\subsection{Open Questions}

Our work leaves open two main directions of inquiry. First, \Cref{thm:main} shows that three Hamming balls relative to different bases cannot cover $\F_2^n$. The proof technique does not seem to say much about coverings using four or more balls. When the Hamming balls are relative to the same basis, at radius $r \leq n/2 - 6\sqrt{n}$, any covering requires $\Omega(n)$ balls\footnote{Interestingly, the proof that any such covering requires $\Omega(n)$ balls as well as the proof of \Cref{thm:twoarb} use Spencer's ``six standard deviations suffice'' result \cite{spencersix}.}. When the radius is $r = n/2 - \sqrt{n}/2$, there exist constructions of coverings using $O(n)$ balls. To the best of our knowledge, even if the Hamming balls are relative to different bases, there are no known constructions of coverings using $o(n)$ balls. It remains an interesting question to prove tight upper and lower bounds on the number of balls in such coverings. 

In terms of progress towards \Cref{q:partition}, improving \Cref{thm:twoarb} to work for a Hamming ball of larger radius seems like a modest first step. In particular, any proof which does not heavily rely on the Spencer ``six standard deviations suffice'' theorem seems likely to shed light on \Cref{q:partition}. The proof of \Cref{thm:twoarb} uses Spencer's result to argue that, for certain Cayley graphs on $\F_2^n$, the induced subgraph on strings of weight $n/2 \pm C\sqrt{n}$ has a rich collection of paths. Understanding the properties of induced subgraphs of Cayley graphs seems both challenging and necessary for making progress on this question. One potential direction is the following: are there other structured sets $S$ with the property that if $\F_2^n = A \cup B \cup S$, then $A+A$ or $B+B$ contains a coset of codimension $O(1)$? More generally, it seems natural to ask if techniques from graph theory or discrepancy theory can help say more about \Cref{q:partition}. 

\subsection{Organization}

\Cref{sec:prelim} contains preliminaries and \Cref{sec:main} contains a proof of \Cref{thm:main} assuming three key lemmas. \Cref{sec:pfthm2} contains the proof of \Cref{thm:twoarb}. \Cref{sec:pflem12} contains the proofs of \Cref{lem:upperweight} and \Cref{lem:subspacepart}, and \Cref{sec:pflem3} contains the proof of \Cref{lem:subspacehitsmiddle}. 

\subsection{Acknowledgements}

The authors thank their advisor David Zuckerman for his careful guidance throughout the duration of this work. The authors also thank Geoffrey Mon, Maya Sankar, and Kaave Hosseini for helpful discussions, and Anthony Ostuni for a careful reading that greatly improved the presentation of this manuscript.

\section{Preliminaries}\label{sec:prelim}

For integers $a,b$, we say $a \mid b$ if $a$ divides $b$. For a set $U$, we denote the power set, or set of all subsets of $U$, by $\mathcal{P}(U)$. For a set $A \subset U$, we denote the complement of $A$ in $U$ by $A^c$. 

Let $e_1, \dots, e_n \in \F_2^n$ denote the standard basis vectors. For a point $x \in \F_2^n$, we define the \textit{weight} of $x$ to be the number of indices $i \in [n]$ where $x_i = 1$, which we will denote $\text{wt}(x)$. For two points $x,y \in \F_2^n$, we define their distance $d(x,y)$ to be the number of indices $i \in [n]$ where $x_i \neq y_i$. Note that $d(x,y) = \text{wt}(x + y)$. We denote the Hamming ball of radius $r$ around a point $x \in \F_2^n$ by 
$$
    B_r(x) \coloneqq \{y \in \F_2^n \mid d(x,y) \leq r\}.
$$
For a linear map $L \colon \F_2^n \to \F_2^n$ and a subset $A \subset \F_2^n$, we denote
$$
    L(A) = \{Lx \mid x \in A\}.
$$
Throughout, we will use $LB_r(x)$ as a shorthand for $L(B_r(x))$. One can think of $LB_r(x)$ as the set of points in $\F_2^n$ which have distance $r$ from $Lx$ in the basis given by the columns of the matrix of $L$. We will also use the shorthand $1^n$ to denote the element $(1, \dots, 1) \in \F_2^n$.

For two sets $A, B \subset \F_2^n$, we denote their sumset by $A + B = \{a+b\mid a\in A, b\in B\} \subset \F_2^n$. We will also need the following proposition concerning when $A+A$ contains a codimension 1 subspace, which is implicit in \cite{TomSanders2011}. We reproduce their proof here.  

\begin{proposition}\label{prop:codim1}
    Let $A \subset \F_2^n$. Suppose $A+A$ does not contain any coset of codimension 1. Then, there exists a linear basis $\{v_1, \dots, v_n\}$ and a shift $v \in \F_2^n$ such that $v + \{0, v_1, \dots, v_n\} \subset (A+A)^c$. 
\end{proposition}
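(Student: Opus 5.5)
We use the standard description of codimension-1 cosets. Every codimension-1 coset is either a hyperplane $H_\xi = \{x : \langle \xi, x\rangle = 0\}$ or its complement $H_\xi^c = \{x : \langle \xi, x\rangle = 1\}$, for some nonzero $\xi$. The hypothesis says that for every nonzero $\xi$, both $H_\xi$ and $H_\xi^c$ meet $C \coloneqq (A+A)^c$. We may assume $A \neq \emptyset$, since otherwise $C = \F_2^n$ and the conclusion is trivial. In that case $0 \in A+A$, so $0 \notin C$.

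\textbf{Step 1: $C$ is not contained in any codimension-1 coset.} This is immediate from the hypothesis. If $C \subset H_\xi$, then $H_\xi^c \subset A+A$; if $C \subset H_\xi^c$, then $H_\xi \subset A+A$. Either way $A+A$ would contain a codimension-1 coset, a contradiction.

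\textbf{Step 2: $C$ affinely spans $\F_2^n$.} The affine span of $C$ is an affine subspace. If it were proper, it would lie in some codimension-1 coset (any proper affine subspace does), contradicting Step 1.

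\textbf{Step 3: extract the points.} Since $C$ affinely spans $\F_2^n$, it contains $n+1$ affinely independent points $c_0, c_1, \dots, c_n$; this is the standard fact that a spanning set contains an affine basis. Set $v = c_0$ and $v_i = c_i + c_0$. Affine independence means exactly that $\{v_1, \dots, v_n\}$ is linearly independent, hence a basis. Moreover $v + \{0, v_1, \dots, v_n\} = \{c_0, \dots, c_n\} \subset C = (A+A)^c$, which is the required conclusion.

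There is no real obstacle; the only step that needs care is Step 1, which uses the fact that both $H_\xi$ and its complement count as codimension-1 cosets. Because of this, the conclusion is phrased affinely with a shift $v$, rather than as $C$ linearly spanning. Indeed, $C$ could lie in a hyperplane complement and then fail to be "centered" at $0$.
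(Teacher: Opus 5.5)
Your proof is correct and is essentially the paper's argument, phrased via affine span instead of from a fixed base point. The paper picks any $v \in (A+A)^c$ and notes that if $v + (A+A)^c$ failed to contain $n$ linearly independent vectors, it would lie in a hyperplane $H$; then $(v+H)^c \subset A+A$ would be a codimension-1 coset, which is exactly your Steps 1--3 with $c_0 = v$ (and your formulation also quietly covers the case $(A+A)^c = \emptyset$, which the paper's proof glosses over).
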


\begin{proof}
    Let $S = (A+A)^c$. Suppose that for every $v \in S$, there is no linear basis $\{v_1, \dots, v_n\}$ such that $v + \{0, v_1, \dots, v_n\} \subset S$. In other words, for every $v \in S$, the set $v + S$ contains at most $n-1$ linearly independent vectors. It follows that for every choice of $v \in S$, there exists a coset $H$ of codimension 1 so that $v + S \subset H$. Then, it follows that $(v+H)^c \subset A+A$, which gives the desired result because $(v+H)^c$ is also a coset of codimension 1.
\end{proof}

\section{Proof of \Cref{thm:main}}\label{sec:main}

In this section, we prove our main result, restated below: 

\main*

Our main result will follow from the following three lemmas. Informally, the following lemma states for any choice of $x,y \in \F_2^n$, there exists a linear subspace $V \leq \F_2^n$ such that $x,y \in V$, and there exists an affine shift $v + V$ which only contains strings of high weight in $\F_2^n$.

\begin{restatable}{lemma}{upperweight}\label{lem:upperweight}
    For $n$ sufficiently large, $r \leq n/2-4$, and any $x,y \in \F_2^n$, there exists an affine subspace $v + V \subset B_{n-r}(1^n)$ satisfying $\dim(V) \geq n-r-4$ with the property that $x,y \in V$. 
\end{restatable}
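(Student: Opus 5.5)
The plan is to build $V$ and $v$ explicitly from a pairing of coordinates. The weight bound comes from giving each pair weight exactly one, and the membership $x,y \in V$ comes from pairing only coordinates on which $x$ and $y$ behave alike. Throughout, note that $z \in B_{n-r}(1^n)$ is equivalent to $d(z,1^n) = n - \text{wt}(z) \le n-r$, i.e.\ to $\text{wt}(z) \ge r$. So we need an affine subspace, all of whose points have weight at least $r$, whose direction space contains $x$ and $y$.

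\textbf{Step 1 (classify coordinates).} Partition $[n]$ into the four classes
$$C_{ab} = \{i \in [n] : x_i = a,\ y_i = b\}, \qquad a,b \in \F_2 .$$
Inside each class, greedily pair up the coordinates into disjoint pairs $\{i,j\}$. Each class leaves at most one coordinate unpaired, so at most $4$ coordinates are unpaired overall. Hence we obtain at least $(n-4)/2$ disjoint pairs, and each pair lies inside a single class $C_{ab}$. Since $r \le n/2-4 \le (n-4)/2$, we may select exactly $r$ of these pairs, say $\{i_1,j_1\},\dots,\{i_r,j_r\}$. Let $F$ be the set of the remaining $n-2r$ coordinates.

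\textbf{Step 2 (define the coset).} Set
$$V = \{z \in \F_2^n : z_{i_k} = z_{j_k} \text{ for all } k \in [r]\},$$
and let $v = e_{i_1} + \dots + e_{i_r}$. Then $V$ is spanned by the vectors $e_{i_k}+e_{j_k}$ for $k \in [r]$ together with the $e_f$ for $f \in F$. Hence
$$\dim V = r + (n-2r) = n-r \ge n-r-4 .$$
Because $x$ is constant on each chosen pair (both coordinates lie in the same class), we have $x \in V$, and likewise $y \in V$.

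\textbf{Step 3 (weight bound).} Every $z \in v+V$ satisfies $z_{i_k} + z_{j_k} = 1$ for each $k$. So $z$ has exactly one $1$ in each of the $r$ chosen pairs, and therefore $\text{wt}(z) \ge r$. This gives $v+V \subset B_{n-r}(1^n)$, as required.

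I expect no real obstacle: the only point needing care is that there are enough pairs, which is exactly where the hypothesis on $r$ is used. The slack of $4$ in the dimension and in the radius hypothesis comfortably absorbs the (at most four) leftover coordinates. Note also that the construction never uses that $n$ is large.
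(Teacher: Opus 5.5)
Your proof is correct and is essentially the paper's construction: constrain disjoint coordinate pairs to contain exactly one $1$ each, while making sure $x$ and $y$ are constant on every constrained pair. The only difference is that you pair coordinates within each class $C_{ab}$ directly, whereas the paper sorts the classes into contiguous blocks, pairs consecutive coordinates, and adds $e_s, e_{s+t}, e_{s+t+u}$ to repair the at most three pairs straddling block boundaries (compensating by using $\lceil r+3\rceil$ pairs); your version avoids that loss and even gives $\dim V = n-r$.
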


The following lemma is more of a trick that allows us to turn a covering of the form $S \subset A \cup B$ into a covering $S \subset A \cup (x+A)$ as long as $S$ satisfies certain symmetry conditions. 

\begin{restatable}{lemma}{subspacepart}\label{lem:subspacepart}
    Let $A, B \subset \F_2^n$, and suppose $x \in (A+A)^c$ and $y \in (B+B)^c$. Then, if $S \subset A \cup B$ satisfies $S = x+S = y+S$, then  
    $$
        S \subset A \cup (x + A).
    $$
\end{restatable}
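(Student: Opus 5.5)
The proof is a direct element-chasing argument that uses only the two sumset conditions and the translation invariance of $S$. No earlier results are needed. Fix an arbitrary $s \in S$; the goal is to show $s \in A$ or $s + x \in A$, the latter being equivalent to $s \in x + A$. If $s \in A$ we are done, so assume $s \notin A$. Since $S \subset A \cup B$, this forces $s \in B$.

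\textbf{Step 1: $s + y \in A$.} Because $S = y + S$, the point $s + y$ lies in $S$, hence in $A \cup B$. If $s + y \in B$, then $y = s + (s+y) \in B + B$, contradicting $y \in (B+B)^c$. Therefore $s + y \in A$.

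\textbf{Step 2: $s + x \in A$.} Since $S = x + S$, the point $s + x$ lies in $S \subset A \cup B$. Suppose for contradiction that $s + x \in B$. Invariance under both shifts gives $s + x + y \in S$. As in Step 1, this point cannot lie in $B$: otherwise $y = (s+x) + (s+x+y) \in B + B$. So $s + x + y \in A$. Combining this with Step 1 yields
$$
    x = (s+y) + (s+x+y) \in A + A,
$$
which contradicts $x \in (A+A)^c$. Hence $s + x \notin B$, so $s + x \in A$ and $s \in x + A$.

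Since $s \in S$ was arbitrary, $S \subset A \cup (x+A)$. There is no real obstacle here. The only point requiring care is to apply the condition $y \notin B+B$ twice, at the base points $s$ and $s+x$, and then close the loop using $x \notin A+A$. This is exactly where the invariance of $S$ under both $x$ and $y$ is needed.
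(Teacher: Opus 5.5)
Your proof is correct, and it takes a somewhat different route from the paper. The paper first reduces to the case $S = A \cup B$ (implicitly replacing $A, B$ by $A \cap S$, $B \cap S$). It then records the two one-step inclusions $x + A \subset B$ and $y + B \subset A$, and finishes by counting: these inclusions force $|A| = |B|$, hence $x + A = B$.

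You never compare cardinalities. Instead you chase a single point around the square $s,\ s+y,\ s+x+y,\ s+x$, using invariance under both shifts at once. In particular you use the point $s+x+y$, which the paper never needs. You apply $y \notin B+B$ twice and then close the loop with $x \notin A+A$.

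The paper's route is shorter once the two inclusions are written down. It also gives a slightly stronger structural conclusion: $S \cap B = x + (S \cap A)$, so $S$ splits exactly into a set and its $x$-translate. However, it depends on the preliminary reduction and on finiteness of $\F_2^n$ for the cardinality step.

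Your route is purely local and needs neither the reduction nor finiteness, so it works verbatim in any elementary abelian $2$-group, including infinite ones. Note also that your assumption $s \notin A$ is used only to get $s \in B$. Your argument therefore shows $s + x \in S \cap A$ for every $s \in S \cap B$. Combined with the easy inclusion $x + (S \cap A) \subset S \cap B$, this recovers the paper's equality without any counting.
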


The last lemma is a mild extension of a result in \cite{enomoto1987codes}, which says that affine subspaces which avoid strings of weight roughly $n/2$ must have bounded dimension. The original statement in \cite{enomoto1987codes} mentions only linear subspaces, but their proof also works for affine subspaces. We include a proof in \Cref{sec:pflem3} for completeness.

\begin{restatable}[\cite{enomoto1987codes}]{lemma}{subspacehitsmiddle}\label{lem:subspacehitsmiddle}
    For $n$ sufficiently large and $r < n/2-3$, suppose $v \in \F_2^n$ and $V \leq \F_2^n$ is a linear subspace. If $v+V \subset B_r(0) \cup B_r(1^n)$, then $\dim(V) < n/2+3$. 
\end{restatable}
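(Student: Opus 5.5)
The plan is to argue by contradiction. Suppose $v+V\subset B_r(0)\cup B_r(1^n)$ with $d=\dim V\ge n/2+3$ and $r<n/2-3$. Then every point of $v+V$ has weight at most $r$ or at least $n-r$, so no point has weight in the window $(r,n-r)$. This window has width larger than $6$ and is centred at $n/2$. The proof should produce a point of $v+V$ whose weight lies in the window.

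\textbf{Step 1 (echelon form).} Put a basis of $V$ in reduced row-echelon form. This gives pivot coordinates $P=\{p_1,\dots,p_d\}$ and basis vectors $u_1,\dots,u_d$ with $(u_i)_{p_j}=\mathbbm{1}[i=j]$. For every $T\subset[d]$, the point $v+\sum_{i\in T}u_i$ agrees with $v$ on $P$ except on $\{p_i: i\in T\}$, where it is flipped. Hence the restriction of $v+V$ to $P$ is all of $\F_2^P$. The non-pivot part lives on only $n-d\le n/2-3$ coordinates. In particular $v+V$ meets both balls: the point whose pivot pattern is all ones has weight at least $d>r$, so it lies in $B_r(1^n)$, and symmetrically some point lies in $B_r(0)$.

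\textbf{Step 2 (reduction to pivot-supported vectors).} Each coordinate outside $P$ is a linear functional of the pivot values. Let $V'\le V$ be the common kernel of these $n-d$ functionals. Then $\dim V'\ge 2d-n\ge 6$, and every vector of $V'$ is supported inside $P$. For any $x\in v+V$, the coset $x+V'$ fixes all non-pivot coordinates of $x$ and varies only on $P$. So inside $x+V'$ the weight is $\text{wt}(x|_{P^c})$ plus the weight of an element of the affine space $x|_P+V'|_P\subset\F_2^{P}$. Choosing $x$ appropriately, for instance with pivot pattern of weight about $d/2$ and hence close to the middle, the question becomes whether an affine space of dimension at least $6$ inside $\F_2^{d}$ must realise weights filling a window of width $6$ around a prescribed target.

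\textbf{Step 3 (moving the weight in small steps).} An affine space of dimension at least $6$ whose nonzero directions all have weight at least $1$ takes at least $7$ distinct weight values. It cannot jump over a window of width greater than $6$ if one walks from a point below the window to a point above it, changing the weight by a controlled amount at each step. To get control on the step sizes, I would apply the echelon argument again to $V'$ inside the coordinates $P$. This yields pivot vectors of $V'$ whose supports overlap only on the non-pivot part of $V'$. I would combine this with a pigeonhole over the $2^{\dim V'}\ge 64$ points of the coset to force a point of weight strictly between $r$ and $n-r$.

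\textbf{Main obstacle.} Step 3 is where I expect the difficulty. Flipping a single basis direction can change the weight by a lot, so a naive intermediate-value argument fails, and the constants ($3$ in both the radius and the dimension) are tight enough that the bookkeeping must be exact. If the direct walk does not work, my first fallback is an averaging argument. The non-constant coordinates of a uniform point of $v+V$ fall into at least $d$ classes of mutually equal-or-complementary uniform bits. Computing $\mathbb{E}[(\text{wt}-n/2)^2]$ together with a parity or second-moment constraint should then show that weight cannot entirely avoid $n/2\pm 3$ when $d\ge n/2+3$, using that $n$ is sufficiently large.
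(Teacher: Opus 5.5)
\textbf{There is a genuine gap.} Steps 1 and 2 are correct linear algebra, but the argument stops exactly where the content of the lemma lies. Step 3 is not a proof, and the heuristics behind it are false.

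\emph{The 7-weights claim.} An affine space of dimension $6$ need not realise $7$ distinct weights. Take $x=e_1+e_3+\dots+e_{11}$; every point of $x+\mathrm{span}\{e_1+e_2,e_3+e_4,\dots,e_{11}+e_{12}\}$ has weight exactly $6$. Simplex codes likewise have only two weights. So pigeonholing over the $2^{\dim V'}$ points forces nothing.

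\emph{The reduced subproblem.} It is also false as stated. Let $V'$ be spanned by the indicators of six disjoint blocks of size $s\ge 8$, and let $x$ vanish on them. The weights on $x+V'$ are $\text{wt}(x)+js$ for $0\le j\le 6$, and these can straddle a window of width $6$ without entering it. ``Choosing $x$ appropriately'' presupposes a point of nearly the right weight, which is what you are trying to find.

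\emph{Why step-size control is unavailable.} Once you restrict to $V'$, the only trace of $\dim V\ge n/2+3$ is $\dim V'\ge 6$. That says nothing about weights near $n/2$. Moreover, a single direction can change the weight by $\Theta(n)$ (e.g.\ the direction $1^n$).

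\emph{The second-moment fallback.} It fails for a similar reason. Avoiding the window only gives $(\text{wt}-n/2)^2\ge 9$ pointwise. A contradiction would therefore need the second moment over the coset to be below $9$, and it is typically of order $n$ or more.

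\textbf{The missing idea} is the binormal form of \cite{enomoto1987codes}, which gives exact rather than approximate weight control. The paper's route is as follows.
\begin{enumerate}
\item Project away at most $3$ coordinates so that $4\mid n$, and shrink $V$ (a coset of a subspace of $V$ lies inside $v+V$). It then suffices to show that every coset of an $n/2$-dimensional $V$ contains a vector of weight $n/2$ or $n/2-1$.
\item Arrange $1^n\in V$. Complementation sends weight $w$ to $n-w$, so this is harmless up to a parity adjustment.
\item Take a matrix whose $n/2$ rows span $V^\perp$, and append a zero column. The length $n+1$ is then odd and every row is orthogonal to the all-ones vector.
\item \Cref{lem:algoforbinorm} brings this matrix, by row operations and column permutations, to a form with $n/2$ disjoint column pairs satisfying $c_{2i-1}+c_{2i}=e_i$.
\item \Cref{prop:binormalweightk} shows every syndrome is the sum of exactly one column from each pair. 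Hence $v+V$ contains a vector of weight exactly $n/2$, or $n/2-1$ after deleting the padding coordinate.
\end{enumerate}

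In your primal language: echelon form gives a complement of $V$ spanned by the unit vectors $e_j$, $j\notin P$. That only shows each coset has a representative of weight at most $n-d$. What you need is a complement spanned by disjoint pair vectors $e_a+e_b$ covering almost all coordinates. Its translate by a point picking one coordinate from each pair has constant weight and meets every coset of $V$ exactly once. This is precisely the constant-weight phenomenon in the counterexample above, which the correct proof exploits rather than fights.
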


Before we give the proof of \Cref{thm:main}, we will discuss a simpler statement which conveys most of the main ideas. Let $r < n/2 - 7$, and assume for the sake of contradiction that we have a covering of the form 
$$
    \F_2^n = B_r(0) \cup LB_r(x) \cup LB_r(x+1^n),
$$
for some shift $x \in \F_2^n$ and a linear map $L \colon \F_2^n \to \F_2^n$. In other words, we are given a covering of $\F_2^n$ using three Hamming balls, where two of the balls are antipodal and relative to the same basis. This implies that the two antipodal balls must cover the complement of the third. In other words,
$$
    B_r(0)^c \subseteq LB_r(x) \cup LB_r(x+1^n),
$$
where $B_r(0)^c$ is the set of strings in $\F_2^n$ which have weight greater than $r$. From here, obtaining a contradiction is quite simple. \Cref{lem:upperweight} lets us argue that $B_r(0)^c \supseteq B_{n/2 +7}(1^n)$ contains an affine subspace of dimension at least $n/2 + 3$, but \Cref{lem:subspacehitsmiddle} tells us the union of two antipodal Hamming balls for small enough $r$ can only contain affine subspaces of dimension less than $n/2+3$. In the proof of \Cref{thm:main}, the stronger conclusion of \Cref{lem:upperweight} in conjunction with \Cref{lem:subspacepart} lets us reduce from the more general case to this easy case. \\

Now, we will show \Cref{thm:main} assuming the above three lemmas.

\begin{proof}[Proof of \Cref{thm:main}]
    Without loss of generality, assume we have a covering
    $$
        \F_2^n = B_r(0) \cup LB_r(y) \cup S
    $$
    for an invertible linear map $L \colon\F_2^n \to \F_2^n$, a shift $y \in \F_2^n$, and a subset $S \subset \F_2^n$. Since $r < n/2 -7$, we have $B_r(0)^c \supseteq B_{n/2 + 7}(1^n)$, so we have the inclusion
    $$
        B_{n/2+7}(1^n) \subseteq LB_r(y) \cup S.
    $$
    Now, assume towards a contradiction that $S+S \neq \F_2^n$. Then, there exists some $x \in (S+S)^c$, and since $r < n/2$, we have $L1^n \not \in LB_r(y) + LB_r(y) = LB_{2r}(0)$. By \Cref{lem:upperweight}, there exists an affine subspace $v + V \subset B_{n/2+7}(1^n)$ satisfying $\dim(V) \geq n/2+3$ with the property that $L1^n, x \in V$. In particular, we have $V = V+L1^n = V+x$, so $v+V$ is also invariant under the same shifts. The conditions of \Cref{lem:subspacepart} are satisfied, so we have
    $$
        v+V \subset LB_r(y) \cup (L1^n + LB_r(y)).
    $$
    Applying $L^{-1}$ and shifting by $y$ gives
    $$
        w+W \subset B_r(0) \cup B_r(1^n),
    $$
    where $w+W = y + L^{-1}(v+V)$. By \Cref{lem:subspacehitsmiddle}, we have $\dim(V) = \dim(W) < n/2+3$, but this contradicts what we showed earlier, which is that $\dim(V) \geq n/2 +3$.
\end{proof}

\section{Proof of \Cref{thm:twoarb}}\label{sec:pfthm2}

Here, we give a proof of \Cref{thm:twoarb}, restated below: 

\twoarb*

Before we begin, we will need a bit of notation. We define the map $\supp\colon\F_2^n \to \mathcal{P}([n])$ where 
$$
    \supp(x) = \{i : x_i =1 \}.
$$
In other words, $\supp$ views a bit vector $x \in \F_2^n$ as the indicator for a subset of $[n]$, and outputs the corresponding subset. To check understanding, note that $\supp(x+1^n) = \supp(x)^c$, since adding $1^n$ to $x \in \F_2^n$ flips every bit. 

We will also need some notation concerning colorings: if $\chi \colon [n] \to \{-1,1\}$ is a coloring, and $S \subset [n]$ is a subset, then we denote $\chi(S) = \sum_{i \in S} \chi(i)$. A set system $\mathcal{S}$ has \emph{discrepancy} at most $k$ if there exists a coloring $\chi$ so that $\chi(S) \leq k$ for every $S \in \mathcal{S}$. The classic result of Spencer states the following: 

\begin{restatable}[``Six standard deviations suffice'' \cite{spencersix}]{thm}{spencer}\label{thm:spencer}
    Any set system of size $m$ on a universe of size $n$ has discrepancy $O(\sqrt{n \log(m/n)})$. 
\end{restatable}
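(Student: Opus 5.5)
This is Spencer's classical theorem, quoted from \cite{spencersix}; the paper only uses it as a black box. If I were to reprove it, I would follow the standard entropy (pigeonhole) method of Beck and Spencer. Throughout I bound $|\chi(S)|$, which is stronger than the one-sided bound in the definition, and I assume $m \geq n$, reading $\log(m/n)$ as $\log(2m/n)$ in the degenerate case. The argument has two parts: a \emph{partial coloring lemma}, and an iteration that turns partial colorings into a full coloring.

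\emph{Partial coloring lemma.} The goal is the following statement. For any $m$ sets $S_1,\dots,S_m$ on a ground set of size $n$, there is $\chi \colon [n] \to \{-1,0,1\}$ such that
\begin{itemize}
\item at least $n/10$ coordinates of $\chi$ are nonzero, and
\item $|\chi(S_j)| \leq C\sqrt{n\log(2m/n)}$ for every $j$.
\end{itemize}
I would prove it in four steps.
\begin{enumerate}
\item Set $\lambda = c\sqrt{n\log(2m/n)}$ for a suitable constant $c$. To each full coloring $\epsilon \in \{\pm1\}^n$, associate the vector $b(\epsilon) \in \mathbb{Z}^m$ with entries $b_j = \operatorname{round}(\epsilon(S_j)/(2\lambda))$.
\item Take $\epsilon$ uniform. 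By Chernoff, $\epsilon(S_j)$ is subgaussian with variance $|S_j| \leq n$. Hence $\Pr[b_j = k]$ decays like $e^{-\Omega(k^2\lambda^2/n)}$, and the entropy satisfies $H(b_j) \lesssim e^{-\Omega(\lambda^2/n)}\,\lambda^2/n$.
\item With $\lambda^2/n = c^2\log(2m/n)$, this gives $H(b_j) \leq (n/2m)^{\Omega(c^2)}$, up to lower-order factors. Summing over the $m$ sets and using subadditivity of entropy, $H(b) \leq n/5$ once $c$ is large.
\item Some value of $b$ is therefore taken by at least $2^{n - n/5}$ colorings. Kleitman's diameter theorem, or simply a volume bound on Hamming balls, shows that a set of that size in $\{\pm1\}^n$ contains two colorings $\epsilon,\epsilon'$ at Hamming distance at least $n/10$.
\end{enumerate}
Now take $\chi = (\epsilon-\epsilon')/2$, which lies in $\{-1,0,1\}^n$ and is nonzero on at least $n/10$ coordinates. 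Since $b(\epsilon) = b(\epsilon')$, each $\epsilon(S_j)$ and $\epsilon'(S_j)$ lie within $\lambda$ of the same multiple of $2\lambda$, so $|\chi(S_j)| \leq \lambda$.

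\emph{Iteration.} Apply the lemma repeatedly to the set system restricted to the still-uncolored points. After $t$ rounds at most $n_t \leq (9/10)^t n$ points remain uncolored. Round $t$ adds at most $C\sqrt{n_t\log(2m/n_t)}$ to every $|\chi(S_j)|$. The total is
$$
\sum_t C\sqrt{(9/10)^t n\,\bigl(\log(2m/n) + t\log(10/9)\bigr)} = O\!\left(\sqrt{n\log(2m/n)}\right),
$$
because the factor $(9/10)^{t/2}$ decays geometrically and dominates the $\sqrt{t}$ growth. Once $n_t$ falls below a constant, color the remaining points arbitrarily, which costs $O(1)$ per set.

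\emph{Main obstacle.} The delicate step is the entropy estimate in the partial coloring lemma. One must check that the rounding buckets are coarse enough that the total entropy of $b$ is a small constant fraction of $n$, even though $m$ may be much larger than $n$. This is exactly where the $\sqrt{\log(m/n)}$ factor is forced.

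An alternative route to the partial coloring lemma is the Lovett--Meka Gaussian random walk. It avoids the entropy calculation and proves the same lemma with similar parameters; the iteration step is unchanged.
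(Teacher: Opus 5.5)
Your outline is correct and is the standard partial-coloring (entropy/pigeonhole) argument behind Spencer's theorem. The paper itself gives no proof: it quotes the result from \cite{spencersix} as a black box and uses it only with at most $8(n+1)$ sets, a case your argument covers after padding with empty sets to reach $m \geq n$.

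The one step to state more carefully is the iteration sum. Bounding $\sqrt{n_t\log(2m/n_t)}$ by its value at $(9/10)^t n$ does not follow from bounding the two factors separately, because the logarithm moves the wrong way. It follows instead from the fact that $x \mapsto x\log(2m/x)$ is increasing on $(0,2m/e]$. This costs only a constant factor in the first few rounds when $n$ is close to $m$.
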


\paragraph{Warmup to \Cref{thm:twoarb}.} Before we give the proof of \Cref{thm:twoarb}, we will first discuss a simpler case that highlights the main ideas. Let $r < n/2-k$ for some sufficiently large integer $k$, and suppose we have a covering of $\F_2^n$ of the form
$$
    \F_2^n = B_r(0) \cup A \cup (1^n+A),
$$
where $A$ is a set with the property that $1^n + \{0,e_1, \dots, e_n\} \subseteq (A+A)^c$. In particular, this implies that $A+A$ does not contain a coset of codimension 1. Let $T$ denote the set of strings in $\F_2^n$ with weight in the range $n/2 \pm k$. Note that if $t \in T$, we also have $t +1^n \in T$. Since $T \subset B_r(0)^c$, we have
$$
    T \subset A \cup (1^n+A).
$$
From here, we will consider the Cayley graph with vertex set $T$, where two vertices $u,v \in T$ are connected by an edge if $u+v \in \{e_1, \dots, e_n\}$. Note that if $u,u+e_i \in T$, then either $u,u+e_i \in A$ or $u,u+e_i \in (1^n + A)$. If we had $u \in A$ and $u+e_i \in (1^n +A)$, then we would obtain $1^n + e_i \in A+A$, a contradiction. In other words, $A$ and $1^n +A$ are closed under taking paths using edges from $\{e_1, \dots, e_n\}$. If we can argue that there must be some path from $t \in T$ to $t +1^n \in T$, then will obtain a contradiction, since it means $t, t +1^n$ are both in $A$. Indeed, it is not too hard to check that for large enough $n, k$, the Cayley graph on $T$ generated by $\{e_1, \dots, e_n\}$ is connected. 

In general, we might only have the assumption that $1^n + \{0,v_1, \dots, v_n\} \subseteq (A+A)^c$, where $\{v_1, \dots, v_n\}$ is an arbitrary linear basis. Again, we will try to find a path from $t \in T$ to $t+1^n \in T$, where $T$ is the vertex set of the graph, and $\{u,v\}$ is an edge if $u+v \in \{v_1, \dots, v_n\}$ \footnote{In other words, we are trying to find paths in an induced subgraph of the standard Cayley graph generated by $\{v_1, \dots, v_n\}$.}. To do this, we will appeal to \Cref{thm:spencer}. To see why this might be useful, consider the simpler task of trying to find a $t \in T$ with the property that $t+v_1$ is also in $T$. Define the set system 
$$
    \mathcal{S} = \bigg\{[n], \supp(v_1) , \supp(v_1)^c\bigg\}.
$$
If the set system has discrepancy at most $k$, then there exists a coloring $\chi$ with the property that
$$
    \sum_{i \in [n]} \chi(i) \leq k \quad\text{and}\quad \sum_{i \in \supp(v_1)}\chi(i) \leq k \quad\text{and}\quad \sum_{i \not \in \supp(v_1)}\chi(i) \leq k.
$$
Let $t_i = 1$ if $\chi(i) = -1$ and 0 otherwise. Then, the first inequality means that $t \in \F_2^n$ has Hamming weight in the range $n/2 \pm k/2$, and the second and third imply that $t+v_1$ has Hamming weight in the range $n/2 \pm k$. This means $t, t+v_1 \in T$ as desired. In the proof of \Cref{thm:twoarb}, we will use the full strength of \Cref{thm:spencer} to guarantee that many vertices on a desired path are simultaneously in $T$. \\

We now continue with the proof of \Cref{thm:twoarb}.

\begin{proof}[Proof of \Cref{thm:twoarb}]

We start with a covering $\F_2^n = B_r(0) \cup A \cup B$. Assume towards a contradiction that $A+A$ and $B+B$ do not contain a coset of codimension 1. By \Cref{prop:codim1}, there exist linear bases $\{v_1, \dots, v_n\}$ and $\{w_1, \dots, w_n\}$ and shifts $v,w \in \F_2^n$ so that
$$
    v + \{0, v_1, \dots, v_n\} \subset (A+A)^c \quad\text{and}\quad w + \{0, w_1, \dots, w_n\} \subset (B+B)^c. 
$$
Let $U \coloneqq B_r(0)^c$, and consider the set
$$
    T \coloneqq \bigcap_{b \in \{0,1\}^3} (U + b_1 1^n + b_2 v + b_3 w).
$$
Clearly, we have $T \subset U \subset A \cup B$. For now, we will assert that $T$ is nonempty; by the end of the proof we will have found some $t \in T$. Also, we see that $T = 1^n + T = v+T = w+T$. Thus, \Cref{lem:subspacepart} implies $T \subset A \cup (v+A)$. Now, we consider the Cayley graph on $T$ generated by $\{v_1, \dots, v_n\}$, i.e.,~$\text{Cay}(T, \{v_1, \dots, v_n\}) = (V, E)$, where 
$$
    V \coloneqq T \quad\text{and}\quad E = \{\{u,u+v_i\} \mid u,u+v_i \in T, i \in [n]\} .
$$
The claim is that if $\{u,u+v_i\} \in E$, then $u,u+v_i \in A$, or $u,u+v_i \in (v+A)$. Without loss of generality, if $u \in A$ and $u+v_i \in (v+A)$, then we obtain $v+v_i \in A+A$, which is a contradiction. In other words, $A \cap S$ as well as $(v+A) \cap S$ are the union of connected components in $\text{Cay}(T, \{v_1, \dots, v_n\})$. Thus, if we can find a path from $t \in T$ to $t+v \in T$, then we will either have $t,t+v \in A$ or $t,t+v \in (v+A)$, both of which imply $v \in A+A$, a contradiction. Since $\{v_1, \dots, v_n\}$ forms a linear basis, we can write $v = v_{i_1} + v_{i_2} + \dots + v_{i_k}$ for some subset $\{i_1, \dots, i_k\} \subset [n]$. Thus, we will try to derive that the vertices on the path 
$$
    t, t+v_{i_1}, t+v_{i_1} + v_{i_2},  \dots, t + \sum_{j=1}^\ell v_{i_j}, \dots, t + v
$$
are all contained in $T$. 

We will use \Cref{thm:spencer} to show this. Define the set system $\mathcal{S}$ by 
$$
    \mathcal{S} \coloneqq \bigcup_{\ell=0}^k \bigcup_{b\in \{0,1\}^3} \left\{\supp\left(b_1 1^n + b_2 v + b_3 w + \sum_{j=1}^\ell v_{i_j} \right) \right\},
$$
where we define $\sum_{j=1}^0 v_{i_j} = 0$. We record three key facts about $\mathcal{S}$:

\begin{enumerate}
    \item By setting $b = 0^3$, for every choice of $\ell = 0\dots k$, we have $\supp(\sum_{j=1}^\ell v_{i_j}) \in \mathcal{S}$.
    \item By fixing $\ell, b_2, b_3$, we see that if $S \in \mathcal{S}$, we also have $S^c \in \mathcal{S}$. This is because if $\mathcal{S}$ contains $\supp(u)$ for some $u \in \F_2^n$, we also have $\supp(1^n + u) = \supp(u)^c \in \mathcal{S}$.
    \item Similarly, if $\supp(u) \in \mathcal{S}$, we also have $\supp(u+v), \supp(u+w) \in \mathcal{S}$. 
\end{enumerate}  

There are at most $8(k+1) \leq 8(n+1)$ many sets in $\mathcal{S}$, so an application of \Cref{thm:spencer} implies that there exists a coloring $\chi \colon [n] \to \{-1,1\}$ so that $|\chi(S)| \leq C\sqrt{n}$ for every $S \in \mathcal{S}$ where $C$ is an absolute constant. We will now show that the desired $t \in T$ is given by $t_i = (1-\chi(i))/2$, i.e.,~ $t_i = 1$ if $\chi(i) = -1$ and $t_i = 0$ if $\chi(i) = 1$.

Consider an arbitrary $S \in \mathcal{S}$ of the form $S = \supp(u)$ for some $u \in \F_2^n$. Fact (2) implies that $S^c \in \mathcal{S}$. Thus, we have $\chi(S), \chi(S^c) \leq C\sqrt{n}$, which means
$$
    \left|\sum_{i \not\in S} \chi(i) - \sum_{j \in S} \chi(j) \right| \leq 2C\sqrt{n}.
$$
If we rewrite the above sum, we have
$$
    \sum_{i \not\in S} \chi(i) - \sum_{j \in S} \chi(j) = \sum_{i \in [n]} \chi(i) (-1)^{\mathbf{1}\{i \in S\}} = \sum_{i \in [n]}(-1)^{t_i} (-1)^{\mathbf{1}\{i \in S\}} = \sum_{i \in [n]} (-1)^{t_i \oplus u_i},
$$
which is equivalent to saying that $t + u$ has Hamming weight within $C\sqrt{n}$ of $n/2$, i.e.,~$t + u \in U \cap (1^n + U)$. By applying Facts (2) and (3), we also know that
$$
    \supp(b_1 1^n + b_2 v + b_3w + u)\in \mathcal{S} \text{ for every }b \in \{0,1\}^3,
$$
which by the same argument implies that $t + b_2 v + b_3w + u\in U \cap (1^n + U)$ for every $b_2, b_3 \in \{0,1\}$. Recalling the definition of $T$, we see that this is equivalent to $t + u \in T$. Since the choice of $S \in \mathcal{S}$ was arbitrary, it follows from Fact (1) that every vertex $t+ \sum_{j=1}^\ell v_{i_j}$ for $\ell = 0 \dots k$ on our desired path is in $T$, so we obtain the desired contradiction. \qedhere
\end{proof}

\section{Proofs of \Cref{lem:upperweight} and \Cref{lem:subspacepart}}\label{sec:pflem12}

Here  we prove \Cref{lem:upperweight}, which says that for any $x,y \in \F_2^n$, there exists a large linear subspace $V \leq \F_2^n$ so that $x,y \in \F_2^n$, and there exists an affine shift $v + V$ which only contains strings of high weight.

\upperweight*

\begin{proof}
    Let $s = |\{i \mid x_i = y_i = 1\}|$, $t = |\{i \mid x_i = 1, y_i = 0\}|$, $u = |\{i \mid x_i = 0, y_i = 1\}|$, and $v = n - (s+t+u)$. After permuting coordinates, we can assume that $x = 1^{s+t}0^{u+v}$ and $y = 1^s0^t1^{u}0^v$. Since $B_{n-r}(1^n)$ is symmetric under permuting coordinates, it suffices to find the desired subspace $V$ with these choices of $x,y \in \F_2^n$. Now, let $k = 2\lceil r+3\rceil$, and set
    $$
        W = \text{span}\{e_1+e_2, e_3 + e_4, \dots, e_{k-1} +e_k\} + \text{span}\{ \{e_i\}_{i > k}\}
    $$
    with $v = \sum_{i=1}^k e_{2i}$. At this point, it is worth noting that every element in $v+W$ has the property that exactly one of the coordinates at indices $2i-1$ and $2i$ are set to 1 for $1 \leq i \leq k/2$. We then set $V = W + \text{span}\{e_s, e_{s+t},e_{s+t+u}\}$\footnote{If $s=0$, we can trivially set $e_0 = 0^n$.}. This guarantees we have $x,y \in V$. Also, every element in $v + W$ has weight at least $k/2$, and thus every vector in $v + V$ has weight at least $k/2 - 3 \geq r$. The last point to check is the dimension of $V$, which is at least
    \begin{align*}
        \dim(V) \geq \dim(W) &= k/2 + (n-k) \\
        &= n - k/2 \\
        &\geq n-r-4. \qedhere
    \end{align*} 
\end{proof}

The following lemma shows that if we have a covering $S \subset A \cup B$ where $S$ satisfies certain symmetry conditions, then we also have the covering $S \subset A \cup (x+A)$. Coverings of this form end up being simpler for us to work with in both \Cref{thm:main} and \Cref{thm:twoarb}. 

\subspacepart*

\begin{proof}
 It suffices to prove the statement when $S = A \cup B$. First, note that if $z \in A$, then $x+z \in B$. This follows because $x+z \in S$ since $S = x+S$, but $x +z \not \in A$ since $x \in (A+A)^c$. Similarly, if $z \in B$, then $y+z \in A$. Thus, we have $x+A \subset B$, and $y+B \subset A$. These together imply that $|A| = |B|$, and in particular, that $x+A = B$ as desired.  
\end{proof}

\section{Proof of \Cref{lem:subspacehitsmiddle}}\label{sec:pflem3}

For the sake of completeness, we include a proof \Cref{lem:subspacehitsmiddle}, which was originally stated and proven in \cite{enomoto1987codes}.

\subspacehitsmiddle*

We note that \Cref{lem:subspacehitsmiddle} is stated in \cite{enomoto1987codes} in different language for linear subspaces $V$, but their result extends to affine subspaces $v+V$ with little effort. There, the authors were concerned with understanding the dimension of linear codes which avoid codewords of certain weights. For the sake of completeness, we provide their proof in this section.

An important notion will be the definition of a matrix in \textit{binormal form}. 

\begin{definition}[Binormal form]
    Let $M \in \F_2^{k \times n}$ be a matrix with $2k \leq n$. Let $c_1, \dots, c_n \in \F_2^k$ be the columns of $M$. We say that $M$ is in binormal form if for $i = 1, \dots, k$, we have
    $$
        c_{2i-1} + c_{2i} = e_i \in \F_2^k.
    $$
\end{definition}
Stated differently, the $(2i-1)$-th and $(2i)$-th column differ only in the $i$-th coordinate. Clearly, a matrix in binormal form has rank exactly $k$, since the vectors $e_1, \dots, e_k$ are in the span of the columns. As we will soon see, the important property of matrices in binormal form is that every element in $\F_2^k$ can be written using the sum of exactly $k$ columns. For a string $\eps = (\eps_1, \dots, \eps_k) \in \{0,1\}^k$, we define 
$$
    b(\eps) = \sum_{1 \leq i \leq k} b_{2i-\eps_i} \in \F_2^k.
$$
In other words, the sequence $\eps$ tells you which columns to take, where $\eps_i=1$ indicates to take $b_{2i-1}$ in the sum, and $\eps=0$ indicates to take $b_{2i}$. For example, a sequence of $\eps = 1^k$ would set $b(\eps)$ to be the sum of the odd indexed columns from $1$ to $2k$. The following proposition demonstrates the key property we will need of matrices in binormal form. 

\begin{proposition}\label{prop:binormalweightk}
    If $M \in \F_2^{k \times n}$ is in binormal form, and $v \in \F_2^k$, then there exists a unique choice for $\eps \in \{0,1\}^k$ such that $b(\eps) = v$.  
\end{proposition}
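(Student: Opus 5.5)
We argue by a counting bijection together with a telescoping identity. Here $b_j$ denotes the $j$-th column $c_j$ of $M$, and we view $b$ as a map $\{0,1\}^k \to \F_2^k$. Both the domain and the codomain have exactly $2^k$ elements. It therefore suffices to show that $b$ is injective (equivalently, surjective); bijectivity gives existence and uniqueness of $\eps$ simultaneously.

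To prove injectivity, we compare $b(\eps)$ and $b(\eps')$ directly. For each $i$, the $i$-th summands $b_{2i-\eps_i}$ and $b_{2i-\eps'_i}$ coincide if $\eps_i = \eps'_i$. If $\eps_i \neq \eps'_i$, they are the two columns $b_{2i-1}, b_{2i}$ in some order, so their sum is $b_{2i-1}+b_{2i} = e_i$ by the binormal form condition. Working over $\F_2$, this gives
$$
    b(\eps) + b(\eps') = \sum_{i \,:\, \eps_i \neq \eps'_i} e_i ,
$$
which is exactly the vector $\eps + \eps' \in \F_2^k$. Hence $b(\eps) = b(\eps')$ forces $\eps = \eps'$, so $b$ is injective.

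Alternatively, existence can be seen constructively. We have the identity $b(\eps) = b(0^k) + \eps$, where $\eps$ is read as a vector in $\F_2^k$. So for a target $v$ we simply take $\eps = v + b(0^k)$, and the same identity shows this choice is the only one.

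There is no real obstacle here. The only point requiring care is the indexing convention: $\eps_i = 1$ selects the column $b_{2i-1}$ and $\eps_i = 0$ selects $b_{2i}$. Whichever column is chosen in each pair, the binormal condition makes their sum equal to $e_i$.
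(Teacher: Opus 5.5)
Your proof is correct and follows the paper's argument: both compute $b(\eps)+b(\eps')=\sum_{i:\eps_i\neq\eps'_i} e_i$ to get injectivity, then conclude bijectivity because $\{0,1\}^k$ and $\F_2^k$ both have $2^k$ elements (the paper leaves this count implicit). Your identity $b(\eps)=b(0^k)+\eps$ is the same computation written as an explicit inverse, $\eps = v + b(0^k)$.
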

\begin{proof}
    Let $\eps, \delta \in \{0,1\}^k$ be distinct sequences, and consider $b(\eps) + b(\delta)$. We have
    $$
        b(\eps) + b(\delta) = \sum_{1 \leq i \leq k} b_{2i-\eps_i} + b_{2i - \delta_i} = \sum_{1 \leq i \leq k}  \mathbf{1}\{\eps_i \neq \delta_i\} \cdot e_i.
    $$
    Since $\eps \neq \delta$, there exists an index $i$ where $\eps_i \neq \delta_i$, which implies the above vector is nonzero since the $e_i$ are linearly independent. Thus, the map $b \colon \{0,1\}^k \to \F_2^k$ is a bijection. 
\end{proof}

One helpful consequence of \Cref{prop:binormalweightk} is the following: if $M \in \F_2^{k \times n}$ is in binormal form, then for every $y \in \F_2^k$, there exists some $x \in \F_2^n$ such that $\text{wt}(x) = k$ and $Mx = y$. This is because if there exists some $\eps \in \{0,1\}^k$ so that $b(\eps) = y$, then we know that there are exactly $k$ columns of $M$ whose sum is $y$. We will use this idea to show that every large enough affine subspace contains an element of weight roughly $n/2$, which suffices to prove \Cref{lem:subspacehitsmiddle}.

In order to put a matrix into binormal form, we will need the following lemma. 

\begin{lemma}\label{lem:algoforbinorm}
    Let $M \in \F_2^{k \times n}$ be a full-rank matrix with $n$ odd, $2k < n$, and every row of $M$ orthogonal to $1^n$ (i.e.,~$M 1^n = 0$). Then, $M$ can be brought to binormal form by row operations and permutations of the columns.
\end{lemma}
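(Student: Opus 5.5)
We argue by induction on $k$. The idea is to peel off one pair of columns whose sum can be made equal to $e_1$ by a change of basis in $\F_2^k$ (a row operation), delete that pair and the first row, and recurse on the remaining $(k-1)\times(n-2)$ matrix. The hypotheses are preserved: $n-2$ is still odd and $2(k-1)<n-2$. The base case $k=0$ is vacuous.

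\textbf{Inductive step.} Let $c_1,\dots,c_n$ be the columns of $M$. Suppose we find two columns $c_a\neq c_b$ such that the other $n-2$ columns still span $\F_2^k$. Then $c_a+c_b\neq 0$, so some invertible row transformation sends $c_a+c_b$ to $e_1$. Permute columns so that $a,b$ become positions $1,2$. Let $M'$ be rows $2,\dots,k$ restricted to columns $3,\dots,n$. We check the three hypotheses for $M'$.
\begin{itemize}
\item \emph{Orthogonality.} Row operations preserve $M1^n=0$. In rows $2,\dots,k$ the entries of $c_a$ and $c_b$ coincide, because $c_a+c_b=e_1$. So they cancel in each row sum, and $M'1^{n-2}=0$.
\item \emph{Full rank.} The deleted columns did not break spanning of $\F_2^k$, so the remaining columns still span after projecting away the first coordinate. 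Hence $M'$ has rank $k-1$.
\item \emph{Lifting.} By induction, $M'$ reaches binormal form using row operations among rows $2,\dots,k$ and column permutations of columns $3,\dots,n$. Apply the same operations to the corresponding rows of $M$. These operations never touch row $1$, and they act identically on $c_a$ and $c_b$ in rows $2,\dots,k$. So they preserve $c_1+c_2=e_1$, and the whole matrix ends up in binormal form.
\end{itemize}

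\textbf{Choosing the pair (main obstacle).} This is where $n$ odd, $n>2k$ and $M1^n=0$ are used. Fix a basis $c_{j_1},\dots,c_{j_k}$ among the columns. There are $n-k\ge k+1$ non-basis columns.
\begin{itemize}
\item \emph{Two distinct non-basis columns exist.} Take them as $c_a,c_b$. The basis remains, so the rest spans.
\item \emph{All non-basis columns equal some $w$, with $k\ge2$.} The relation $\sum_i c_i=0$ reads $\sum_\ell c_{j_\ell}+(n-k)w=0$.
  \begin{itemize}
  \item If $w=0$, this contradicts independence of the basis.
  \item If $w=c_{j_m}$ is itself a basis vector, then $\sum_{\ell\ne m}c_{j_\ell}$ is a multiple of $c_{j_m}$. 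This again contradicts independence, since $k\ge 2$.
  \item Otherwise, pick $c_a=c_{j_m}$ with $j_m$ in the support of $w$ in the basis expansion, and $c_b=w$. Then $c_a\ne c_b$. The remaining columns contain the other $k-1$ basis vectors and another copy of $w$ (since $n-k\ge2$). These span, because $w$ has a nonzero $c_{j_m}$-coefficient.
  \end{itemize}
\item \emph{$k=1$.} The columns are scalars. Their sum is $0$ and $n$ is odd, so not all equal $1$; full rank gives at least one $1$. Choose $c_a=1$, $c_b=0$, which is already binormal. Spanning of the rest is not needed, since $M'$ has zero rows.
\end{itemize}
I expect this case analysis to be the only delicate point. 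The rest is bookkeeping that row operations confined to rows $2,\dots,k$ commute with the pairing structure.
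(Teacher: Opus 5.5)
Your pair-selection argument and your checks of orthogonality, rank and parity for $M'$ are correct. The lifting step, however, claims too much. Binormal form requires $c_{2i-1}+c_{2i}=e_i$ as vectors in $\F_2^k$, so for $i\ge 2$ the columns $c_{2i-1}$ and $c_{2i}$ must also agree in row $1$. Your recursion only controls rows $2,\dots,k$ of columns $3,\dots,n$, and row $1$ of those columns is never touched, so nothing forces this agreement. Concretely, take $k=2$, $n=5$ and
\[
M=\begin{pmatrix}1&0&0&1&0\\0&0&0&1&1\end{pmatrix}.
\]
Your rule can pick the pair $(c_1,c_2)$ here: take the basis $\{c_4,c_5\}$; then $c_1\neq c_2$ are non-basis columns, $c_1+c_2=e_1$, and $c_3,c_4,c_5$ span $\F_2^2$. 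This gives $M'=\begin{pmatrix}0&1&1\end{pmatrix}$. If the $k=1$ step pairs the original columns $3$ and $4$, which your rule allows, the resulting pair sums to $(1,1)^T\neq e_2$. So the output is not binormal.

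The repair is one extra step, the same clearing trick the paper uses repeatedly. After the recursion, for each $i\in\{2,\dots,k\}$ such that $c_{2i-1}$ and $c_{2i}$ differ in row $1$, add row $i$ to row $1$. Row $i$ has equal entries within every pair except pair $i$: for pair $1$ because $c_1+c_2=e_1$, and for the other pairs by binormality of the lower block. So each addition corrects pair $i$ in row $1$ and disturbs neither $c_1+c_2=e_1$ nor any other pair. In the example, adding row $2$ to row $1$ finishes the job.

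With this patch your proof is complete, and it takes a genuinely different route from the paper's. The paper deletes the last row, puts the top $(k-1)\times n$ block into binormal form by induction, and then repairs the last row by explicit case analysis. That analysis includes the awkward case where the last row vanishes beyond column $2k-2$, handled by swapping column pairs and rows. You instead split off one row together with one column pair, chosen by your basis/non-basis argument. This makes it transparent where $M1^n=0$ is needed: to exclude $w=0$ and $w=c_{j_m}$, and, together with $n$ odd, to rule out an all-ones row at $k=1$.
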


\begin{proof}
    We apply induction on $k$. In the base case of $k=1$, the single row must have a $0$ and $1$ (the row being $0^n$ contradicts full rank, and the row being $1^n$ contradicts being orthogonal to $1^n$ since $n$ is odd). Thus, we can permute the columns so that the first two bits are different, which means the matrix is in binormal form.

    Now consider a $k\times n$ matrix with full rank and all rows orthogonal to $1^n$. The $(k-1)\times n$ submatrix obtained by removing the last row also satisfies these properties, so we can apply the inductive hypothesis on this submatrix to bring it into binormal form. 

    To bring the entire matrix into binomial form, it suffices to ensure that in the last row,  $M_{k,2i-1} = M_{k,2i}$ for $i < k$, and $M_{k,2k-1} = M_{k,2k}$. For $1\le i < k$, if $M_{k,2i-1} \neq M_{k,2i}$, we can add the $i$-th row to the last row in order to make the entries equal. Notice that since the upper $(k-1) \times n$ submatrix is in binormal form, any two entries $M_{k,2j-1}, M_{k,2j}$ for $1 \leq j \leq k$ that are equal will remain equal after adding the $i$-th row.
    
    All that remains is to ensure that $M_{k,2k-1} \neq M_{k,2k}$. For simplicity, we will denote the last row vector by the vector $r \in \F_2^n$. Notice that if there existed a 0 and 1 entry past the $(2k-2)$-th entry of $r$, then we can use column permutations to bring them to the $(2k-1)$-th and $2k$-th entry. At this point, we might have caused some violations to binormality in the $(2k-1)$-th or $2k$-th columns. By summing $r$ with any previous row whose $(2k-1)$-th and $2k$-th entry are distinct, we make the overall matrix binormal. This corrects any violations to binormality in the $(2k-1)$-th and $2k$-th columns, and it will not create new violations in any other column, since $M_{k,2i-1} = M_{k,2i}$ for $1 \leq i < k$. 
    
    We now have to deal with the case $r_i = 0$ for all $i > 2k-2$ (the all $1$s case is not possible due to orthogonality with $1^n$). Because $M$ has full rank, there must exist some $i \leq k-1$ such that $(r_{2i-1},r_{2i}) = (1,1)$ (otherwise $r = 0$). We can now swap columns $2i-1$ and $2i$ with columns $2k-3$ and $2k-2$, respectively, and then swap rows $i$ and $k-1$ (via row operations). As an example, the given submatrix $(M_{i,j})_{1 \leq i \leq k, 1 \leq j \leq 2k}$ will undergo the following operations, starting from the assumption that $(r_1, r_2) = (1,1)$: 
    $$
    \begin{pmatrix}
        10 & \dots &11& 00 \\
        \vdots & & \vdots & \vdots \\
        00 & \dots & 01 & 11 \\
        11 & \dots &00 & 00
    \end{pmatrix} \xrightarrow[\text{swap cols }2,2k-2]{\text{swap cols 1,}2k-3}
    \begin{pmatrix}
        11 & \dots & 10 & 00 \\
        \vdots & & \vdots & \vdots \\
        01 & \dots & 00 & 11 \\
        00 & \dots & 11 & 00
    \end{pmatrix} \xrightarrow{\text{swap rows }1, k-1}
    \begin{pmatrix}
        01 & \dots &00& 11 \\
        \vdots & & \vdots & \vdots \\
        11 & \dots & 10 & 00 \\
        00 & \dots & 11 & 00
    \end{pmatrix}$$
    This operation preserves binormality of the upper $(k-1)\times n$ submatrix while also setting $r_{2k-3}=r_{2k-2} = 1$. In particular, we also have $M_{k-1, 2k-3} \neq M_{k-1, 2k-2}$. We can assume that $M_{k-1,2k-3} = 0$ and $M_{k-1,2k-2}= 1$ by swapping columns. At this point, the submatrix $(M_{i,j})_{k-1 \leq i \leq k, 2k-3 \leq j \leq 2k+1}$ is of the form $\begin{pmatrix}
        0 & 1 & \ast & \ast & \ast \\
        1 & 1 & 0 & 0 & 0
    \end{pmatrix}$. Additionally, $r_i = 0$ for all $i \geq 2k-1$. 
    
    There are now two cases: if $M_{k-1,j} = 1$ for all $j \geq 2k-1$, then after adding the bottom two rows, we will have the submatrix $(M_{i,j})_{k-1\le i\le k, 2k-3\le j\le 2k+1} = \begin{pmatrix}0&1&1&1&1 \\ 1&0&1&1&1\end{pmatrix}$. Swapping the columns corresponding to the 2nd and 3rd column in this submatrix now puts the bottom row, as well as the $(2k-2)\times n$ left-submatrix, in binormal form. We now repeat the trick of adding the last row to any row $i$ above it with $M_{i,k-1}\neq M_{i,k}$. This gives us the desired binormal form.
    
    If $M_{k-1,j} = 0$ for some $j \geq 2k-1$, since the rows are orthogonal to $1^n$ and $n$ is odd, there must exist some $j'\geq 2k-1$ such that $M_{k-1,j'} = 1$. Furthermore, there must exist another $j'' \geq 2k-1$ with $j \neq j''$ such that $M_{k-1,j''} = 0$. By permuting columns, we may assume $j = 2k-1$, $j' =2k$, and $j'' = 2k+1$, and so we have the following submatrix $(M_{i,j})_{k-1\le i\le k, 2k-3\le j\le 2k+1} = \begin{pmatrix}
        0 & 1 & 0 & 1 & 0 \\ 1 & 1 & 0 & 0 & 0
    \end{pmatrix}$. After permuting columns, we can get to the following induced submatrix $\begin{pmatrix}
        0 & 1 & 0 & 0 & 1 \\ 0 & 0 & 0 & 1 & 1
    \end{pmatrix}$. This submatrix is in binormal form. At this point, the only violations to binormality can be found in the pairs of elements in the $(2k-3)$-th and $(2k-2)$-th columns, or the $(2k-1)$-th and $2k$-th columns. We again repeat the trick of adding the $(k-1)$-th or $k$-th row to the appropriate row where $M_{i,2k-3} \neq M_{i,2k-2}$ or $M_{i,2k-1} \neq M_{i,2k}$, respectively, which puts the entire matrix into binormal form. 
\end{proof}

From here, we can prove \Cref{lem:subspacehitsmiddle}. 

\begin{proof}[Proof of \Cref{lem:subspacehitsmiddle}]
    We first treat the case where $4 \mid n$, showing that every affine subspace
    $v+V \subset \mathbb{F}_2^n$ with $\dim(V) \ge n/2$ contains a vector of
    Hamming weight $n/2$ or $n/2-1$. The case of general $n$ reduces to this as follows.
    Write $n = 4m + k$ with $0 \le k \le 3$. By ignoring the last $k$ coordinates,
    the affine subspace $v+V \subset \mathbb{F}_2^n$ with $\dim(V) \ge n/2 + 3$
    projects to an affine subspace $v' + V' \subset \mathbb{F}_2^{n-k}$ satisfying
    $\dim(V') \ge n/2 + 3 - k \ge (n-k)/2$. 
    
    Since $4 \mid (n-k)$, the base case implies that $v' + V'$ contains a vector of
    weight $(n-k)/2$ or $(n-k)/2 - 1$. Lifting this vector back to
    $\mathbb{F}_2^n$, we conclude that $v+V$ contains a vector whose weight lies in
    the interval
    $[\tfrac{n-k}{2}-1,\; \tfrac{n-k}{2}+k]$.
    When $r \le n/2 - 3$, this contradicts the assumption that
    $v+V \subset B_r(0) \cup B_r(1^n)$, completing the reduction.
        
    Assume towards a contradiction that $4 \mid n$, and there exists an affine subspace $v+V$ with $\dim(V) = n/2$ that does not contain an element of weight $n/2$ or $n/2-1$. Without loss of generality, we may assume that $1^n \in V$, since otherwise, we may consider $V' = \text{span}\{1^n, V\}$, which has dimension $n/2 + 1$, which does not contain a word of weight $n/2$ or $n/2+1$. Then, we can take $V''$ to be the set of elements of $V'$ with even weight, which will decrease the dimension by at most 1, and will give an affine subspace of dimension at least $n/2$ which contains no element of weight $n/2$ or $n/2-1$.
    
    Now, consider $V^{\perp}$, and note that $\dim(V^\perp) = n/2$. Let $M_0 \in \F_2^{n/2 \times n}$ be the matrix whose rows are a basis for $V^{\perp}$. Since $\ker(M_0) = V$, the preimages of elements in $\F_2^{n/2}$ under $M_0$ are shifts of $V$, i.e.,~we have
    $$
         v+V = \{x \mid M_0x = M_0v\}.
    $$
    Now, append a column of the all zero vector $0^{n/2} \in \F_2^{n/2}$ to $M_0$ to create $M$. Note that $M_0x = y$ if and only if $M (x\circ b) = y$, where $x \circ b$ denotes the vector obtained by concatening a bit $b \in \{0,1\}$ to $x$. Since $1^n \in V$, all of the rows of $M$ are orthogonal to $1^n$, which means that we can apply \Cref{lem:algoforbinorm} to put $M$ into binormal form, which we will call $M'$. Note that since $M'$ is obtained from $M$ by row operations and permuting columns, there exists an invertible matrix $R$ and a permutation matrix $P$ such that $M' = RMP$. We want to use \Cref{prop:binormalweightk} to find a vector $x$ of weight $n/2$ or $n/2-1$  so that $M_0 x = M_0 v$. It suffices to find a vector $y \in \F_2^{n+1}$ of weight $n/2$ such that $M'y = RM_0v$. To see why, recall that $R$ is invertible, and that
    $$
        M'y = RM_0v \implies RMPy = RM_0v \implies MPy = M_0v.
    $$
    $P$ is a permutation matrix, so $Py$ is a vector of weight $n/2$ with the property that $MPy = M_0v$. Since the last column of $M$ is all zeros, cutting off the last coordinate of $y$ gives a vector $y'$ of weight $n/2$ or $n/2-1$ with $M_0y' = M_0v$, which means $y' \in v+V$. By \Cref{prop:binormalweightk}, our desired choice of $y$ exists, since there exists a choice of $\eps \in \{0,1\}^{n/2}$ such that $b(\eps) = RM_0 v$.
\end{proof}

\bibliographystyle{alpha} \bibliography{main.bib}
\end{document}